\documentclass[11pt]{article}% Math and Physical Sciences Reference Style
\usepackage[T1]{fontenc}
\usepackage{times} % tgtermes is better
\usepackage{mathptmx}
\usepackage{textcomp}
\usepackage{blindtext}

\usepackage{mathtools}
\usepackage{amsmath,amssymb,amsthm}
\usepackage{enumitem}

\usepackage{color}
\usepackage{comment}

\newcommand{\Eb}{\mathbb{E}}
\newcommand{\Rb}{\mathbb{R}}

\newcommand{\Zb}{\mathbb{Z}}

\newcommand{\Hc}{\mathcal{H}}

\newcommand{\Fc}{\mathcal{F}}
\newcommand{\D}{\textup{d}}
\definecolor{plum}{rgb}{0.3,0,0.7}

\newtheorem{theorem}{Theorem}[section]

\newtheorem{lemma}[theorem]{Lemma}

\title{Convergence Rate of a Functional Learning Method for Contextual Stochastic Optimization}

\author{Noel Smith and Andrzej Ruszczy\'nski \footnote{Department of Management Science and Information Systems, Rutgers University, email: noel.smith@rutgers.edu;rusz@rutgers.edu}}

\begin{document}

\maketitle

\abstract{
We consider a stochastic optimization problem involving two random variables: 
a context variable $X$ and a dependent variable $Y$. The objective is to minimize 
the expected value of a nonlinear loss functional applied to the conditional 
expectation $\mathbb{E}[f(X, Y,\beta) \mid X]$, where $f$ is a 
nonlinear function and $\beta$ represents the decision variables. 
 We focus on the practically important setting in which direct 
sampling from the conditional distribution of $Y \mid X$ is infeasible, and 
only a stream of i.i.d.\ observation pairs $\{(X^k, Y^k)\}_{k=0,1,2,\ldots}$ 
is available. In our approach, the conditional expectation is approximated within a prespecified 
parametric function class. 
We analyze a simultaneous learning-and-optimization algorithm that jointly 
estimates the conditional expectation and optimizes the outer objective. 
Using a specially designed measure of non-optimality, combining the squared norm of the objective function's gradient and the mean square error of the auxiliary parametric model,} we 
establish that the method achieves a convergence rate of order 
$\mathcal{O}\big(1/\sqrt{N}\big)$, where $N$ denotes the number of observed pairs. \\
\emph{Keywords}: {Conditional Stochastic Optimization, Learning}

% \PACS{PACS code1 \and PACS code2 \and more}
% \subclass{MSC code1 \and MSC code2 \and more}
%\subclass{90C15 \and 90C48}

\maketitle

\section{Introduction}
We consider the contextual optimization problem:
\begin{equation}
    \min_{\beta \in \Rb^{n_\beta}} G(\beta) \triangleq \Eb \Bigl\{ g( \Eb [ f(X,Y,\beta) \mid X ] ) \Bigr\}
    \label{eq:CO}
\end{equation}
where $f: \Rb^{n_X} \times \Rb^{n_Y} \times \Rb^{n_\beta} \rightarrow \Rb^{n_f}$ and $g: \Rb^{n_f} \rightarrow \Rb$.

In this problem, $X$ can be interpreted as a \emph{context} random variable, the conditional expectation
$\Eb [ f(X,Y,\beta) \mid X ]$ evaluates the performance for each context value, and the nonlinear function $g(\cdot)$ serves as a tool to aggregate the performance across all context values. If $g(\cdot)$ is linear
then \eqref{eq:CO} reduces to the standard stochastic optimization problem with an expected value objective, but for 
a nonlinear $g(\cdot)$, \eqref{eq:CO} is an instance of a \textit{conditional stochastic optimization} problem.

Conditional stochastic optimization problems occur in machine learning; see \cite{hu2020sample,hu2021bias} and the references therein. In statistics, applications include instrumental variable regression and  counterfactual prediction \cite{bennett2019deep,chen2024stochastic,goda2023constructing,hartford2017deep, muandet2020dual}.

The literature on methods of conditional stochastic optimization focuses mainly on the case when sampling from the conditional distribution of $Y$, given $X=x$, is possible for each value of $x$. In such a setting, Ref. \cite{hu2020sample}  establishes the sample complexity of the sample average approximation procedure.  Refs. \cite{hu2020biased,hu2021bias} analyze a biased stochastic gradient descent method, with increasing sizes of samples from the conditional distribution. This allows for constructing better and better stochastic gradient estimates of the composition.
Ref. \cite{he2024debiasing} proposed to perform bias correction via extrapolation. The conditions required to construct unbiased gradient estimators for smooth conditional stochastic optimization problems were introduced in \cite{goda2023constructing}.

Even if $X$ is a discrete random variable, for a large sample space it is very difficult to have enough repeated observations $(\tilde{X}_i, \tilde{Y}_i)$ for each specific context realization $\tilde{X}_i=x$. 

Refs. \cite{dai2017learning,singh2019kernel} consider a related problem of learning conditional expectation in a functional space. They assume a convex function $g(\cdot)$, employ a Reproducing Kernel Hilbert Space to represent the conditional expectation function,  exploit Fenchel duality towards a min-max reformulation, and propose the use of a saddle-point-seeking method. In this context, they do not require a two-level (nested) sampling, but only samples from the joint distribution. This approach is restricted to settings in which a convex-concave problem arises after the reparameterization.

Along a similar line of research, Ref. \cite{qi2025integrated} considers the case when $Y$ is a discrete random variable, and a compact set of functions $\Hc$ is known such that the conditional distribution of $Y$, given $X=x$, is a function $p^*(x)$ from this class. The compactness requirement reduces \textit{de facto} $X$ to a discrete random variable as well.

We focus on the situation in which direct 
sampling from the conditional distribution of $Y \mid X$ is infeasible, and 
only a stream of i.i.d.\ observation pairs $\{(X^k, Y^k)\}_{k=0,1,2,\ldots}$ 
is available.
To solve problem \eqref{eq:CO} in this setting, Ref. \cite{ruszczynski2024functional} introduced an auxiliary \emph{parametric
functional model} of the conditional expectation,
\begin{equation}
    F(X,\beta) \triangleq \Eb \left[ f(X,Y,\beta) \mid X \right].
\end{equation}
We assume a sufficiently rich class of functions $\varPsi : \Rb^{n_X} \times \Rb^{n_\theta} \rightarrow \Rb^{n_f}$ exists such that for every $\beta$ there exists
$\bar{\theta}(\beta) \in \Rb^{n_\theta}$ such that:
\begin{equation}
    F(X,\beta) = \varPsi(X, \bar{\theta}(\beta) ) \quad \text{a.s.}
    \label{eq:ERROR}
\end{equation}
To guide the updates of $\theta$ as $\beta$ changes, we use the mean square error of the auxiliary model:
\begin{equation}
    Q(\beta, \theta) \triangleq \frac{1}{2} \Eb \left[ \| F(X,\beta) - \varPsi(X, \theta) \|^2\right].
\end{equation}
The model accuracy property \eqref{eq:ERROR} is implicit in our key assumption \ref{item:Growth} formulated at the end of the next section.

An important special class of models $\varPsi(\cdot,\cdot)$ are \emph{linear architecture (regression) models},
with \emph{features} $\varPsi_j(\cdot)$, $j=1,\dots,n_\theta$:
\begin{equation}
    \label{regression}
\varPsi(X,\theta) = \sum_{j=1}^{n_\theta} \theta_j \varPsi_j(X).
\end{equation}
Modern machine learning literature uses such modeling assumptions to derive
complexity bounds for fundamental reinforcement learning algorithms; see  
\cite{jin2020provably}, \cite[Part VI]{LattimoreSzepesvari2020},  and the references within.
%More complicated is a neural network which
%transforms the features by a composition of  nonlinear operators; the vector $\theta$ represents various parameters (gains) of the network \cite{cai2019neural}.

The analysis of \cite{ruszczynski2024functional} focused on \emph{asymptotic convergence} for a broad class of Norkin differentiable functions 
in the problem and in the parametric model.
The main purpose of the present article is to provide the convergence rate analysis of the method for the case
when the functions $f(\cdot)$, $\varPsi(X, \cdot)$, and $g(\cdot)$ are continuously differentiable and twice continuously differentiable, respectively.

\section{Assumptions and Basic Properties}
We make the following differentiability and integrability assumptions.
\begin{enumerate}[label=\textbf{(A\arabic*)}]
    %\item \label{item:BCompactConvex}$\Bb$ is convex and compact
    \item \label{item:Lipschitz} The function $g(\cdot)$ is convex, twice continuously differentiable, and there exist constants $L_g$,
    $L_{\nabla g}$, and $L_{\nabla^2 g}$ such that $\| \nabla g(u) \| \leq L_g$, $\| \nabla^2 g(u) \| \leq L_{\nabla g}$, and $\| \nabla^2 g(u) - \nabla^2 g(v)\| \le L_{\nabla^2 g}\|u-v\|$ for all $u,v \in \Rb^{n_f}$.
    %For every $x \in \Rb^{n_X}$ and $y \in \Rb^{n_Y}$, the function $f(x, y, \cdot)$ is continuously differentiable and there exists a constant $L_{\nabla f} > 0$ such that
%    $\| \nabla_\beta f(x, y, u) - \nabla_\beta f(x, y, v) \| \leq L_{\nabla f} \| u-v \|$ for all $u,v \in \Rb^{n_\beta}$.
%
%    For every $x \in \Rb^{n_X}$, the function $\varPsi(x,\cdot)$ is continuously differentiable and there exists a constant $L_{\nabla \varPsi} >0$ such that
%    $\| \nabla_\theta \varPsi(x, u) - \nabla_\theta \varPsi(x, v) \| \leq L_{\nabla \varPsi} \| u-v \|$ for all $u,v \in \Rb^{n_\theta}$.
    \item \label{item:FIntegrable} There exist functions $L_f(x,y)$ and $L_{\nabla f}(x,y)$ and constants $\overline{L}_f > 0$, $\overline{L}_{\nabla f} > 0$,
     and $C_f >0$ such that, for all $\beta$, $\beta'$,
    and all $(x,y)$, and $p=2,4$,
    \begin{gather*}
        \| \nabla_\beta f(x,y,\beta) \| \leq L_f(x,y), \\
        \| \nabla_\beta f(x,y,\beta) - \nabla_\beta f(x,y,\beta') \| \leq L_{\nabla f}(x,y) \| \beta - \beta'\|,\\
        \Eb \left[  L_f(X,Y) ^p \right] \leq \overline{L}_f^p, \\
        \Eb \left[ \| f(X,Y,\beta) \|^p \right] \leq C_f^p,\\
        \Eb \left[  L_{\nabla f}(X,Y)^p \right] \leq \overline{L}_{\nabla f}^p.
        \end{gather*}
    \item \label{item:PsiIntegrable} For all $x \in \Rb^{n_X}$ the function $\varPsi(x,\cdot)$ is differentiable. Furthermore, there exist functions
    $L_\varPsi(x)$ and $L_{\nabla \varPsi}(x)$ and constants $\overline{L}_\varPsi > 0$, $\overline{L}_{\nabla\varPsi} > 0$, and $C_\varPsi >0$ such that, for all $\theta$, $\theta'$, $x$, and $p=2,4$
    \begin{gather*}
        \| \nabla_\theta \varPsi(x,\theta) \| \leq L_\varPsi(x), \\
        \| \nabla_\theta \varPsi(x,\theta) - \nabla_\theta \varPsi(x,\theta') \| \leq L_{\nabla \varPsi}(x) \| \theta - \theta'\|,\\ 
        \Eb \left[ L_\varPsi(X) ^p \right] \leq \overline{L}_\varPsi^p,\\
        \Eb \left[ \| \varPsi(X,\theta) \|^p \right] \leq C_\varPsi^p,\\
        \Eb \left[  L_{\nabla \varPsi}(X)^p  \right] \leq \overline{L}_{\nabla \varPsi}^p.
    \end{gather*}
\end{enumerate}
\begin{lemma}
    \label{lemma:differentiable}
    Under Assumptions \textup{\ref{item:Lipschitz}} and \textup{\ref{item:FIntegrable}}, the function $F(X, \cdot)$ is continuously differentiable a.s. with
    \[
    \nabla_\beta F(X, \beta) = \Eb \left[ \nabla_\beta f(X,Y,\beta) \mid X \right],
    \]
    and $\nabla_\beta F(X, \cdot)$ is Lipschitz continuous with the constant 
    $L_{\nabla F}(X)  = \Eb \left[ L_{\nabla f}(X,Y) \mid X\right]$. Furthermore,
    $ \Eb \big[ L_{\nabla F}(X)^2\big] \le \overline{L}_{\nabla f}^2$.
    \end{lemma}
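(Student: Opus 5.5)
The plan is to work with a regular conditional distribution of $Y$ given $X$, written $P_{Y|X}(\cdot\mid x)$, so that $F(x,\beta)=\int f(x,y,\beta)\,P_{Y|X}(\D y\mid x)$ for all $x$ outside a null set. I will then carry out the differentiation for each such fixed $x$, using dominated convergence.

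The first step is to identify a good set of $x$. By (A2), $\Eb[L_f(X,Y)^2]\le\overline{L}_f^2$, $\Eb[L_{\nabla f}(X,Y)^2]\le\overline{L}_{\nabla f}^2$, and $\Eb\|f(X,Y,\beta_0)\|\le C_f$ for a fixed $\beta_0$. By the tower property, the conditional expectations
\[
\Eb[L_f(X,Y)\mid X=x],\qquad \Eb[L_{\nabla f}(X,Y)\mid X=x],\qquad \Eb[\|f(X,Y,\beta_0)\|\mid X=x]
\]
are finite for $x$ in a set $A$ of full measure. The set $A$ does not depend on $\beta$, which is what makes the statement hold simultaneously for all $\beta$, a.s.

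The second step is differentiation under the integral for $x\in A$. Fix $\beta$ and a direction $h$. The mean value theorem gives
\[
\|f(x,y,\beta+th)-f(x,y,\beta)\|/|t|\le L_f(x,y)\|h\|,
\]
so the difference quotients are dominated by an integrable function under $P_{Y|X}(\cdot\mid x)$. The same bound, $\|f(x,y,\beta)\|\le\|f(x,y,\beta_0)\|+L_f(x,y)\|\beta-\beta_0\|$, shows that $F(x,\beta)$ is finite for every $\beta$. Dominated convergence then gives directional derivatives equal to $\int\nabla_\beta f(x,y,\beta)h\,P_{Y|X}(\D y\mid x)$. This expression is linear in $h$, and it is continuous in $\beta$ by the Lipschitz bound of the next step. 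A function whose Gateaux derivative is linear and continuous in $\beta$ is continuously (Fréchet) differentiable, so
\[
\nabla_\beta F(x,\beta)=\Eb[\nabla_\beta f(X,Y,\beta)\mid X=x].
\]

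The third step is the Lipschitz bound, obtained by bringing the norm inside the integral:
\[
\|\nabla_\beta F(x,\beta)-\nabla_\beta F(x,\beta')\|\le\int L_{\nabla f}(x,y)\|\beta-\beta'\|\,P_{Y|X}(\D y\mid x)=L_{\nabla F}(x)\|\beta-\beta'\|.
\]
The final moment bound follows from conditional Jensen's inequality, $L_{\nabla F}(X)^2\le\Eb[L_{\nabla f}(X,Y)^2\mid X]$, and then taking expectations.

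The main obstacle is the a.s. bookkeeping. Conditional expectations are defined only up to null sets that might depend on $\beta$. Working with the regular conditional distribution, which exists since the spaces are Euclidean, and with the single $\beta$-independent set $A$ is how I will avoid an uncountable union of exceptional sets. Assumption (A1) plays no real role here.
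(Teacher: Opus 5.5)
Your proposal is correct and takes the same route as the paper. Both represent $F$ through the conditional distribution $P_{Y|X}$, differentiate under the integral by dominated convergence, bring the norm inside the conditional expectation to get the Lipschitz constant $L_{\nabla F}(X)$, and use conditional Jensen to get the second-moment bound. Your version fills in details the paper leaves implicit: the single $\beta$-independent exceptional set, the domination by $L_f$ (for vector-valued $f$ this comes from the mean value \emph{inequality}), and the upgrade from a linear Gateaux derivative that is continuous in $\beta$ to continuous differentiability.
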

    \begin{proof}
    We understand $\nabla_\beta F$ as an ${n_\beta} \times {n_f}$ matrix (the transpose of the Jacobian). We can express
        \[
        F(X,\beta) = \int f(X,y,\beta)\; P_{Y|X}(\D y|X),
       \]
        and the assertion follows from the Lebesgue Dominated Convergence Theorem and the definition of the gradient. To show the Lipschitz continuity, let $u,v \in \Rb^{n_\beta}$. Then
        \begin{align*}
            \|\nabla_\beta F(X,u) - \nabla_\beta F(X,v)\| &= \| \Eb \bigl[ \nabla_\beta f(X,Y,u) - \nabla_\beta f(X,Y,v) \mid X \bigr]\|\\
            &\le \Eb \bigl[ \| \nabla_\beta f(X,Y,u) - \nabla_\beta f(X,Y,v) \| \mid X\bigr]
         \le \Eb \left[ L_{\nabla f}(X,Y) \mid X\right] \| u-v \| .
            %&{\color{red}= } \overline{L}_{\nabla f} \| u-v \|,
        \end{align*}
        The last statement follows directly from Assumption \ref{item:FIntegrable}.
    \end{proof}

Thanks to the integrability condition in \ref{item:FIntegrable}, the function $G(\cdot)$ in \eqref{eq:CO} is differentiable and, similar to Lemma \ref{lemma:differentiable},
\begin{align*}
    \nabla_\beta G(\beta) &= \Eb \left[ \nabla_\beta [g \circ F](X, \beta) \right] 
     = \Eb \left[ \nabla_\beta F(X,\beta) \nabla g(F(X,\beta)) \right] \\
     &= \Eb \left[ \nabla_\beta f(X,Y,\beta) \nabla g(F(X,\beta)) \right].
\end{align*}
Furthermore, owing to Lemma \ref{lemma:differentiable},
$\| \nabla_\beta G(\beta) \| \le L_g \overline{L}_f$.
Due to assumptions \ref{item:FIntegrable} and \ref{item:PsiIntegrable}, the error function \eqref{eq:ERROR} is differentiable with
\begin{equation}
    \nabla_{(\beta, \theta)} Q(\beta,\theta) = \Eb \left\{ \begin{bmatrix} \nabla_\beta F(X,\beta) \\ -\nabla_\theta \varPsi(X,\theta) \end{bmatrix} (F(X,\beta) - \varPsi(X,\theta) ) \right\}.
\end{equation}
The expression in braces is integrable by the Cauchy-Schwarz inequality and the square integrability of $\|\nabla_\beta F(X,\beta)\|$ (with $\overline{L}_f^2$, due to Lemma \ref{lemma:differentiable}), and the square integrability of $\| \nabla_\theta \varPsi(X,\theta)\|$ (with $\overline{L}_\varPsi^2$), $\| F(X,\beta)\|$ (with $C_f^2$), and $\| \varPsi(X,\theta)\|$ (with $C_\varPsi^2$),  due to Assumptions \ref{item:FIntegrable} and \ref{item:PsiIntegrable}.\\

Our key modeling assumption is the uniform \emph{{\L}ojasiewicz condition}:
\begin{enumerate}[resume,label=\textbf{(A\arabic*)}]
    \item \label{item:Growth} A constant $M > 0$ exists such that for all $\beta \in \Rb^{n_\beta}$ and $\theta \in \Rb^{n_\theta}$
    \[ Q(\beta, \theta) \leq M \,\| \nabla_\theta Q(\beta, \theta)\|^2. \]
%    \item There exists a constant $\underline{R} > 0$ such that if $\| \theta \| \geq \underline{R}$, then for all $\beta \in \Rb^{n_\beta}$
%   \[ \langle \theta, \nabla_\theta Q(\beta,\theta) \rangle > 0 \]
\end{enumerate}
The classical condition was introduced in \cite{lojasiewicz1963propriete}  to obtain convergence of gradient flows; we will use  \ref{item:Growth} to guarantee the efficacy of our auxiliary model tracking process. In particular, it implies that
for every $\beta$ the minimal value of $Q(\beta,\cdot)$, if it exists, is zero and thus \eqref{eq:ERROR} is satisfied. In some important cases, the converse is also true. For example, in the linear architecture model \eqref{regression}, if the feature vectors are not collinear, then 
\eqref{eq:ERROR} implies \ref{item:Growth}; see \cite[Rem. 2.2]{ruszczynski2024functional}. Ref. \cite{bolte2010characterizations} provides 
an overview of modern applications of {\L}ojasiewicz-type conditions.

\section{Method}

%We define the set $\Theta_R = \left\{ \theta \in \Rb^{n_\theta}: \| \theta \| \right\}$ and
%assume that $R \geq \underline{R}$ is known.

At each iteration $k = 0,1,\ldots$, given the current approximate solution $(\beta^k, \theta^k)$, we use an observation $(X^k,Y^k)$ from the joint distribution of $(X,Y)$,
to construct random directions $\tilde{d}^k_\beta$ and $\tilde{d}^k_\theta$ and update the current point with stepsize $\tau_k > 0$:
\begin{equation}
    \label{eq:METHOD}
    \begin{aligned}
        \beta^{k+1} &= \beta^k + \tau_k \tilde{d}^k_\beta, \\
        \theta^{k+1} &= \theta^k + \tau_k \tilde{d}^k_\theta.
    \end{aligned}
\end{equation}

We denote by $(\Omega, \Fc, P)$ the algorithmic probability space on which the random sequences generated by the method are defined,
and by $\Fc_k$ the $\sigma$-algebra defined by the history \[(\beta^0,\theta^0,X^0,Y^0,\ldots,X^{k-1}, Y^{k-1},\beta^k,\theta^k).\]
We assume that at iteration $k=0,1,\ldots$ we observe the pair $(X^k,Y^k)$ independently of $\Fc_k$ and compute the following quantities:
\begin{align*}
    f^k &= f(X^k,Y^k,\beta^k),\\
    f^k_\beta &= \nabla_\beta f(X^k,Y^k,\beta^k),\\
    \varPsi^k &= \varPsi(X^k,\theta^k),\\
    \varPsi^k_\theta &= \nabla_\theta \varPsi(X^k,\theta^k),\\
    \nabla g^k &= \nabla g(\varPsi^k).
\end{align*}
Then we compute the directions in \eqref{eq:METHOD}:
\begin{equation}
    \label{eq:DIRECTIONS}
    \begin{aligned}
        \tilde{d}^k_\beta &= -f^k_\beta \nabla g^k, \\
        \tilde{d}^k_\theta &= \gamma \varPsi^k_\theta (f^k - \varPsi^k),
    \end{aligned}
\end{equation}
where $\gamma > 0$ is a parameter of the method.

\section{Convergence Rate Analysis}

Ref. \cite[Thm. 4.12]{ruszczynski2024functional} provides the proof of asymptotic almost sure convergence
of the method described in the previous section to the set
$
\Zb^* = \left\{ (\beta, \theta): Q(\beta, \theta) = 0, \nabla_\beta G(\beta) = 0 \right\}.
$

In this paper, we introduce a measure of non-optimality,
\begin{equation}
\label{V-def}
V(\beta,\theta) \triangleq c_1 Q(\beta,\theta) + c_2 \|\nabla_\beta G(\beta)\|^2,
\end{equation}
with coefficients $c_1>0$ and $c_2>0$ to be chosen later, and estimate its value after a fixed number of iterations.

Define the average directions,
\begin{equation}
    d^k = \Eb\big[ \tilde{d}^k \mid \Fc_k \big], \quad k=0,1,\ldots,
\end{equation}
and let \[ e^k = \tilde{d}^k - d^k. \]
% We assume we can access unbiased estimates of the directions at each iteration, that is,
% \[ \Eb \left[ e^k | \Fc_k \right] = 0 \quad \text{for }k=0,1,\ldots. \]
\begin{lemma}
    \label{lemma:d2e2-bounded}
    Under Assumptions \textup{\ref{item:Lipschitz}} -- \textup{\ref{item:PsiIntegrable}},  the directions $\{d^k\}_{k \geq 0}$ are well-defined, and there exist constants $\sigma > 0$ and $C_d > 0$ such that $\Eb \left[ \|e^k\|^2 \mid \Fc_k \right] \leq \sigma^2$
    and $\Eb \left[ \|d^k\|^2 \mid \Fc_k \right] \leq C_d^2$ for all $k\geq 0$.
    \end{lemma}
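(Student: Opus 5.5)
The plan is to bound the conditional second moment of the full random direction $\tilde d^k=(\tilde d^k_\beta,\tilde d^k_\theta)$ by a constant independent of $k$ and of $(\beta^k,\theta^k)$, and then obtain both claims from it. Since $(X^k,Y^k)$ is independent of $\Fc_k$ and $(\beta^k,\theta^k)$ is $\Fc_k$-measurable, conditional expectations given $\Fc_k$ amount to integrating over the joint law of $(X,Y)$ with $(\beta,\theta)=(\beta^k,\theta^k)$ frozen. So it suffices to bound $\Eb\|\tilde d_\beta(\beta,\theta)\|^2$ and $\Eb\|\tilde d_\theta(\beta,\theta)\|^2$ uniformly over all $(\beta,\theta)$, where $\tilde d_\beta=-\nabla_\beta f(X,Y,\beta)\nabla g(\varPsi(X,\theta))$ and $\tilde d_\theta=\gamma\nabla_\theta\varPsi(X,\theta)(f(X,Y,\beta)-\varPsi(X,\theta))$.

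For the $\beta$-part, \ref{item:Lipschitz} and \ref{item:FIntegrable} give $\|\tilde d_\beta\|\le L_g L_f(X,Y)$, hence $\Eb\|\tilde d_\beta\|^2\le L_g^2\overline{L}_f^2$. For the $\theta$-part,
\[
\|\tilde d_\theta\|\le\gamma L_\varPsi(X)\bigl(\|f(X,Y,\beta)\|+\|\varPsi(X,\theta)\|\bigr).
\]
Applying the Cauchy--Schwarz inequality to the expectation of the square, together with $(a+b)^2\le 2a^2+2b^2$, yields
\[
\Eb\|\tilde d_\theta\|^2\le 2\gamma^2\bigl(\Eb L_\varPsi(X)^4\bigr)^{1/2}\Bigl(\bigl(\Eb\|f\|^4\bigr)^{1/2}+\bigl(\Eb\|\varPsi\|^4\bigr)^{1/2}\Bigr)\le 2\gamma^2\overline{L}_\varPsi^2\bigl(C_f^2+C_\varPsi^2\bigr).
\]
This is exactly why the assumptions include $p=4$. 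The step needing most care is this product of two $(X,Y)$-dependent factors: with only second moments the bound would fail, so the fourth-moment conditions are used here.

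Combining the two parts gives $\Eb[\|\tilde d^k\|^2\mid\Fc_k]\le C^2\triangleq L_g^2\overline{L}_f^2+2\gamma^2\overline{L}_\varPsi^2(C_f^2+C_\varPsi^2)$. In particular $\tilde d^k$ is conditionally integrable, so $d^k=\Eb[\tilde d^k\mid\Fc_k]$ is well-defined; concretely, $d^k_\beta=-\nabla_\beta G$-type expression $\Eb[\nabla_\beta f\,\nabla g(\varPsi)]$ evaluated at the current point.

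The remaining bounds follow from conditional Jensen and the variance decomposition. Conditional Jensen gives $\|d^k\|^2\le\Eb[\|\tilde d^k\|^2\mid\Fc_k]\le C^2$, and since $d^k$ is $\Fc_k$-measurable, $\Eb[\|d^k\|^2\mid\Fc_k]=\|d^k\|^2\le C^2$. For the error term, $\Eb[\|e^k\|^2\mid\Fc_k]=\Eb[\|\tilde d^k\|^2\mid\Fc_k]-\|d^k\|^2\le C^2$. We may therefore take $C_d=\sigma=C$.
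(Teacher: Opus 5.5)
Your proof is correct and follows the same route as the paper: \textup{\ref{item:Lipschitz}} with \textup{\ref{item:FIntegrable}} at $p=2$ for the $\beta$-component, and Cauchy--Schwarz with the $p=4$ moments from \textup{\ref{item:FIntegrable}}--\textup{\ref{item:PsiIntegrable}} for the $\theta$-component. Your version is more explicit than the paper's about the constants, the freezing argument via independence of $(X^k,Y^k)$ from $\Fc_k$, and the $e^k$ bound via $\Eb[\|e^k\|^2\mid\Fc_k]=\Eb[\|\tilde d^k\|^2\mid\Fc_k]-\|d^k\|^2$; the only slip, a harmless one, is the dropped sign in your parenthetical, since $d^k_\beta=-\Eb[\nabla_\beta f\,\nabla g(\varPsi)]$ at the current point.
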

    \begin{proof}
For all $k=0,1,\ldots$ we have
    \begin{align*}
        d^k_\beta &= \Eb \big[\tilde{d}^k_\beta \mid \Fc_k \big] = -\Eb \big[ \nabla_\beta F(X^k,\beta^k) \nabla g(\varPsi(X^k, \theta^k)) \mid \beta^k, \theta^k\big] ,\\
        d^k_\theta &= \Eb \big[\tilde{d}^k_\theta \mid \Fc_k \big] = \gamma \Eb \big[ \nabla_\theta \varPsi(X^k,\theta^k)\big(F(X^k,\beta^k) - \varPsi(X^k, \theta^k)\big) \mid \beta^k, \theta^k\big].
    \end{align*}
     The square integrability of $d_\beta^k$ follows from Assumption \ref{item:FIntegrable} with $p=2$ and Assumption \ref{item:Lipschitz}.
    The square integrability of $d_\theta^k$ follows from Assumption \ref{item:PsiIntegrable} with $p=4$ and
    Assumption \ref{item:FIntegrable} with $p=4$, via the Cauchy-Schwarz inequality.
    \end{proof}

Define the Lyapunov function with a parameter $\lambda \geq 0$:

\begin{equation}
    W^\lambda(\beta,\theta) \triangleq G(\beta) +  \Delta^\lambda(\beta, \theta), \label{eq:LYAPUNOV}
\end{equation}
where
\begin{equation}
    \Delta^\lambda(\beta, \theta) \triangleq \Eb \left[ g(F) - g(\varPsi) -  \nabla g(\varPsi)^\top( F - \varPsi)  + \frac{\lambda}{2}\| F - \varPsi \|^2 \right].
    \label{eq:REGBREGMAN}
\end{equation}
If $\lambda \ge L_{\nabla g}$, then $\Delta^{\lambda}(\beta,\theta) \geq 0$ with strict inequality unless \eqref{eq:ERROR} is satisfied.
 Furthermore,  { $W^\lambda(\beta,\theta) \geq G(\beta)$} for all $(\beta,\theta)$. 

The following derivations are straightforward.
\begin{lemma}
\label{l:4.2}
Under Assumptions \textup{\ref{item:Lipschitz}} -- \textup{\ref{item:PsiIntegrable}}, the function $W^\lambda(\beta,\theta)$ is differentiable and
\begin{align*}
    \nabla_\beta W^\lambda(\beta,\theta) &= \nabla_\beta G(\beta)  +   \nabla_\beta \Delta^{\lambda}(\beta,\theta)\ \\
    & = \Eb \big[ \nabla_\beta F(X,\beta)  \nabla g(F(X,\beta))\big] 
   +  \Eb\big[ \nabla_\beta F(X,\beta) \big( \nabla g(F(X,\beta))-  \nabla g(\varPsi(X,\theta))\big)\big] \\
 &{\quad} +  \lambda \Eb\big[\nabla_\beta F(X,\beta)(F(X,\beta) - \varPsi(X,\theta)) \big], 
 \end{align*}
 \begin{align*}
    \nabla_\theta W^\lambda(\beta,\theta) =   \nabla_\theta \Delta^{\lambda}(\beta,\theta)
    &=  -   \Eb \big[ \nabla_\theta \varPsi(X,\theta) \nabla^2 g(\varPsi(X,\theta))(F(X,\beta) - \varPsi(X,\theta))\big]\\
    &{\quad}  -   \lambda \Eb \big[ \nabla_\theta \varPsi(X,\theta) (F(X,\beta) - \varPsi(X,\theta)) \big] .
\end{align*}
\end{lemma}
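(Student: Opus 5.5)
We will differentiate the integrand pointwise in $(\beta,\theta)$ and then pass the gradient under the expectation using dominated convergence, in the same way as in Lemma~\ref{lemma:differentiable}. Write $F=F(X,\beta)$ and $\varPsi=\varPsi(X,\theta)$. The integrand of $W^\lambda$ is
\[
h(\beta,\theta)=g(F)+g(F)-g(\varPsi)-\nabla g(\varPsi)^\top(F-\varPsi)+\tfrac{\lambda}{2}\|F-\varPsi\|^2 ,
\]
where the first $g(F)$ comes from $G$. By Lemma~\ref{lemma:differentiable}, $F(X,\cdot)$ is continuously differentiable a.s. By \ref{item:PsiIntegrable}, $\varPsi(X,\cdot)$ is differentiable. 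By \ref{item:Lipschitz}, $g\in C^2$. Hence $h$ is differentiable a.s. and the chain rule applies.

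\textbf{Gradient in $\beta$.} Only $F$ depends on $\beta$. Differentiating gives
\[
\nabla_\beta h=\nabla_\beta F\,\nabla g(F)+\nabla_\beta F\big(\nabla g(F)-\nabla g(\varPsi)\big)+\lambda\nabla_\beta F\,(F-\varPsi).
\]
The middle term comes from $\nabla_\beta[g(F)-\nabla g(\varPsi)^\top F]$. Taking expectations yields the stated formula; the first term is $\nabla_\beta G(\beta)$, as computed after Lemma~\ref{lemma:differentiable}.

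\textbf{Gradient in $\theta$.} Only $\varPsi$ depends on $\theta$. Differentiating $-g(\varPsi)-\nabla g(\varPsi)^\top(F-\varPsi)$ in $\varPsi$ gives
\[
-\nabla g(\varPsi)-\nabla^2 g(\varPsi)(F-\varPsi)+\nabla g(\varPsi)=-\nabla^2 g(\varPsi)(F-\varPsi).
\]
Differentiating $\tfrac{\lambda}{2}\|F-\varPsi\|^2$ in $\varPsi$ gives $-\lambda(F-\varPsi)$. Premultiplying both by $\nabla_\theta\varPsi$ (chain rule) gives the stated expression.

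\textbf{Main step: the interchange of derivative and expectation.} We bound difference quotients by integrable functions, uniformly on a neighborhood of $(\beta,\theta)$. By the mean value theorem, the difference quotient of $h$ is bounded by the supremum of $\|\nabla h\|$ along the segment, so it suffices to bound the gradient norms uniformly for parameters in a ball:
\begin{itemize}
\item For $\beta$: $\|\nabla_\beta F\|\le \Eb[L_f(X,Y)\mid X]$, and $\|\nabla g\|\le L_g$. Also $\|F-\varPsi\|\le\|F\|+\|\varPsi\|$, where $\|F(X,\beta')\|\le\|F(X,\beta)\|+\Eb[L_f\mid X]\,\|\beta'-\beta\|$. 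This gives a bound of the form $a(X)+b(X)\rho$ with $a,b\in L^1$, by Cauchy--Schwarz together with the $p=2$ moments in \ref{item:FIntegrable}--\ref{item:PsiIntegrable}.
\item For $\theta$: $\|\nabla^2 g\|\le L_{\nabla g}$ and $\|\nabla_\theta\varPsi\|\le L_\varPsi(X)$. Also $\|\varPsi(X,\theta')\|\le\|\varPsi(X,\theta)\|+L_\varPsi(X)\rho$. The resulting products, such as $L_\varPsi(X)^2$ and $L_\varPsi(X)\|F\|$, are integrable by Cauchy--Schwarz.
\end{itemize}
Dominated convergence then justifies the differentiation under $\Eb$. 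Continuity of the resulting gradients, if differentiability is meant in the Fréchet sense, follows the same way, using the Lipschitz constants $L_{\nabla f}$, $L_{\nabla\varPsi}$ and the Lipschitz continuity of $\nabla^2 g$ from \ref{item:Lipschitz}. This domination step is the only nontrivial point, and it parallels the argument in Lemma~\ref{lemma:differentiable}.
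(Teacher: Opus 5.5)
Your proposal is correct and takes the route the paper intends: the paper gives no written proof for this lemma and calls the derivations ``straightforward'' by analogy with Lemma~\ref{lemma:differentiable}. Your pointwise chain-rule computation (including the cancellation of the $\nabla g(\varPsi)$ terms in the $\theta$-gradient) and the dominated-convergence interchange, justified by bounds on the gradient that hold uniformly on a ball and come from the $p=2$ moments, are exactly that argument written out in more detail than the paper gives.
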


Define the expected direction at the point $(\beta,\theta)$:
\begin{equation}
   \Gamma(\beta, \theta)\triangleq\Eb\left\{\begin{bmatrix} -\nabla_\beta F(X,\beta)\nabla g(\varPsi(X,\theta))  \\
   \gamma\nabla_\theta \varPsi(X,\theta)(F(X,\beta) - \varPsi(X,\theta))\end{bmatrix} \right\}.
\end{equation}
We now show that the change in the Lyapunov function along the expected direction of the method is related to the measure of non-optimality \eqref{V-def}.

\begin{lemma}
\label{l:descent}
Under Assumptions \textup{\ref{item:Lipschitz}} -- \textup{\ref{item:Growth}}, for every $\lambda > \max\{L_{\nabla g}, 2 M \overline{L}_\varPsi^2 L_{\nabla g}\}$, constants $\gamma_{\min}>0$, $c_1>0$, and $c_2>0$ exist such that if $\gamma>\gamma_{\min}$ then for all $(\beta,\theta)$
\begin{equation}
\label{e:descent}
\big\langle \nabla W^\lambda(\beta,\theta), \Gamma(\beta,\theta) \big\rangle \le - V(\beta,\theta).
\end{equation}
\end{lemma}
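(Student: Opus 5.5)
The plan is to expand $\langle \nabla W^\lambda,\Gamma\rangle$ into a $\beta$-block and a $\theta$-block using Lemma~\ref{l:4.2}. The $\beta$-block will be bounded by $-\tfrac12\|\nabla_\beta G\|^2$ plus a multiple of $Q$ whose constant does not depend on $\gamma$. The $\theta$-block will be bounded by $-\gamma\kappa\|\nabla_\theta Q\|^2$ for some $\kappa>0$, which \ref{item:Growth} converts into $-(\gamma\kappa/M)Q$. Choosing $\gamma$ large then absorbs the positive $Q$-term from the $\beta$-block.

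Throughout I abbreviate $A=\nabla_\beta F(X,\beta)$, $B=\nabla_\theta\varPsi(X,\theta)$, $D=F(X,\beta)-\varPsi(X,\theta)$, $H=\nabla^2 g(\varPsi(X,\theta))$, and $r=\nabla g(F)-\nabla g(\varPsi)$. By \ref{item:Lipschitz}, $\|r\|\le L_{\nabla g}\|D\|$ and $\|H\|\le L_{\nabla g}$. Two estimates will be used repeatedly, both from Cauchy--Schwarz:
\[
\Eb[\|A\|\|D\|]\le \overline{L}_f\sqrt{2Q}, \qquad \Eb[\|B\|\|D\|]\le \overline{L}_\varPsi\sqrt{2Q}.
\]
The first uses $\Eb\|A\|^2\le\overline{L}_f^2$ (Jensen plus \ref{item:FIntegrable}); the second uses \ref{item:PsiIntegrable}.

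\emph{$\beta$-block.} By Lemma~\ref{l:4.2}, $\nabla_\beta W^\lambda=\nabla_\beta G+\Eb[Ar]+\lambda\Eb[AD]$. Writing $\nabla g(\varPsi)=\nabla g(F)-r$ gives $\Gamma_\beta=-\nabla_\beta G+\Eb[Ar]$. Set $u=\Eb[Ar]$ and $w=\Eb[AD]$, so that $\|u\|\le L_{\nabla g}\overline{L}_f\sqrt{2Q}$ and $\|w\|\le\overline{L}_f\sqrt{2Q}$. The inner product equals $-\|\nabla_\beta G\|^2+\langle\nabla_\beta G,\,u-u-\lambda w\rangle+\langle u+\lambda w,\,u\rangle$. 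The cross term is therefore at most $a\|\nabla_\beta G\|\sqrt Q$ and the last term at most $bQ$, with $a,b$ depending only on $L_{\nabla g},\overline{L}_f,\lambda$. Young's inequality then yields
\[
\langle\nabla_\beta W^\lambda,\Gamma_\beta\rangle\le -\tfrac12\|\nabla_\beta G\|^2+b'Q, \qquad b'=b+a^2/2,
\]
and $b'$ is independent of $\gamma$.

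\emph{$\theta$-block.} Since $\nabla_\theta Q=-\Eb[BD]$, Lemma~\ref{l:4.2} gives $\nabla_\theta W^\lambda=-\Eb[BHD]+\lambda\nabla_\theta Q$, while $\Gamma_\theta=-\gamma\nabla_\theta Q$. Hence
\[
\langle\nabla_\theta W^\lambda,\Gamma_\theta\rangle=-\gamma\lambda\|\nabla_\theta Q\|^2+\gamma\langle\Eb[BHD],\nabla_\theta Q\rangle .
\]
Since $\|\Eb[BHD]\|\le L_{\nabla g}\overline{L}_\varPsi\sqrt{2Q}$, Young's inequality in the form $\sqrt{2Q}\,\overline{L}_\varPsi\|\nabla_\theta Q\|\le\tfrac12\|\nabla_\theta Q\|^2+\overline{L}_\varPsi^2Q$ applies. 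Combining it with \ref{item:Growth}, $Q\le M\|\nabla_\theta Q\|^2$, bounds the second term by $\gamma L_{\nabla g}\big(\tfrac12+M\overline{L}_\varPsi^2\big)\|\nabla_\theta Q\|^2$. Set $\kappa=\lambda-L_{\nabla g}/2-M\overline{L}_\varPsi^2L_{\nabla g}$. Because $\lambda>\max\{L_{\nabla g},2M\overline{L}_\varPsi^2L_{\nabla g}\}$ exceeds the average of these two numbers, $\kappa>0$. Using \ref{item:Growth} once more,
\[
\langle\nabla_\theta W^\lambda,\Gamma_\theta\rangle\le-\gamma\kappa\|\nabla_\theta Q\|^2\le-(\gamma\kappa/M)\,Q .
\]

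\emph{Conclusion.} Adding the two blocks gives $\langle\nabla W^\lambda,\Gamma\rangle\le-\tfrac12\|\nabla_\beta G\|^2-(\gamma\kappa/M-b')Q$. Take $\gamma_{\min}=2Mb'/\kappa$, $c_2=\tfrac12$, and $c_1=\gamma\kappa/M-b'$, which is at least $b'>0$ when $\gamma>\gamma_{\min}$; this gives \eqref{e:descent}. If a $c_1$ independent of $\gamma$ is preferred, one can take $c_1=\gamma_{\min}\kappa/M-b'$ instead.

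The main obstacle is the $\theta$-block. The curvature term $\Eb[BHD]$ is controlled only through $\sqrt Q$, not through $\nabla_\theta Q$, so the {\L}ojasiewicz condition \ref{item:Growth} is needed to dominate it by $\lambda\|\nabla_\theta Q\|^2$. This is exactly where the threshold on $\lambda$ enters. The secondary point to check is that $b'$ does not depend on $\gamma$, so that enlarging $\gamma$ really does absorb the positive $Q$-term from the $\beta$-block.
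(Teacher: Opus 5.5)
Your proof is correct and follows the paper's route. It uses the same split into $\beta$- and $\theta$-blocks and the same rewriting $\Gamma_\beta=-\nabla_\beta G+\Eb[Ar]$ with Young's inequality on the cross term (the paper keeps a free parameter $\epsilon\in(\lambda^2\overline{L}_f^2/C,2)$ where you fix $\epsilon=1$), and it uses \ref{item:Growth} so that the $\gamma$-scaled $\theta$-block absorbs the $\gamma$-independent $Q$-term. The only variation is in the $\theta$-block: the paper bounds the curvature cross term directly by $2\overline{L}_\varPsi^2L_{\nabla g}Q$ and applies \ref{item:Growth} once to the $\lambda$-term, whereas you pass through $\|\nabla_\theta Q\|^2$ via Young and a second use of \ref{item:Growth}, which is equally valid under the stated hypothesis on $\lambda$.
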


\begin{proof}
 We have
\begin{equation}
\label{descent-rate}
\begin{aligned}
-\langle \nabla_\beta W^\lambda, \Gamma_\beta\rangle
&=
\big\langle \Eb \big[ \nabla_\beta F  \nabla g(F)\big]
+  \Eb\big[ \nabla_\beta F \big( \nabla g(F) - \nabla g(\varPsi)\big)\big]
+  \lambda \Eb\big[\nabla_\beta F(F - \varPsi) \big],   \Eb\big[ \nabla_\beta F\nabla g(\varPsi) \big] \big \rangle \\
  - \langle \nabla_\theta W^\lambda, \Gamma_\theta \rangle & =   \gamma \big\langle \Eb \big[ \nabla_\theta \varPsi \nabla^2 g(\varPsi)(F - \varPsi)\big], \Eb\big[ \nabla_\theta \varPsi(F - \varPsi) \big] \big\rangle\\
 &{\quad} +   \gamma \lambda \big\langle   \Eb \big[ \nabla_\theta \varPsi (F - \varPsi)\big],\Eb\big[ \nabla_\theta \varPsi(F - \varPsi) \big] \big\rangle.
\end{aligned}
\end{equation}
By Jensen's inequality, the Cauchy--Schwarz inequality, and Assumptions \ref{item:Lipschitz} and \ref{item:PsiIntegrable},
{
\begin{align*}
\lefteqn{\mid \langle \Eb \big[ \nabla_\theta \varPsi  \nabla^2 g(\varPsi) (F - \varPsi)\big], \Eb\big[ \nabla_\theta \varPsi(F - \varPsi) \big] \big\rangle \mid}\\
&\le
\Eb \big[ \| \nabla_\theta \varPsi \nabla^2 g(\varPsi) (F - \varPsi)\| \big]\; \Eb \big[ \| \nabla_\theta \varPsi  (F - \varPsi)\| \big] \\
&\le L_{\nabla g}\,\Eb \big[ L_\varPsi(X)\| F - \varPsi\| \big]\; \Eb \big[ L_\varPsi(X)\| F - \varPsi\| \big]\\
&\le \overline{L}_\varPsi^2 L_{\nabla g} \Eb \big[ \| F - \varPsi\|^2 \big] = 2 \overline{L}_\varPsi^2 L_{\nabla g} Q(\beta,\theta).
\end{align*}
From Assumption \ref{item:Growth} we obtain
\[
 \big\| \Eb \big[ \nabla_\theta \varPsi(F - \varPsi)\big]\big\|^2 \ge \frac{1}{M} Q(\beta, \theta).
\]
Combining the last two displayed inequalities, we can bound the second term of \eqref{descent-rate}:
\begin{equation}
\label{second-term}
  \langle \nabla_\theta W^\lambda, \Gamma_\theta \rangle \le 
  -   \gamma \Big( \frac{\lambda}{M}  - 2 \overline{L}_\varPsi^2 L_{\nabla g} \Big) Q(\beta, \theta).
\end{equation}
}
Next, we get %{\color{blue} (this long formula may be skipped, see below)}
\begin{align*}
-\langle \nabla_\beta W^\lambda, \Gamma_\beta\rangle&= \big\| \Eb\big[ \nabla_\beta F\nabla g(F) \big] \big\|^2 \\
%&{\ }  + \big\langle  \Eb\big[ \nabla_\beta F\nabla g(F)\big] , \Eb\big[ \nabla_\beta F(\nabla g(\varPsi) - \nabla g(F))\big]\\
&{\ } +
\big\langle \Eb\big[ \nabla_\beta F \big( \nabla g(F) - \nabla g(\varPsi)\big)\big]
 + \lambda \Eb\big[\nabla_\beta F(F - \varPsi) \big],   \Eb\big[ \nabla_\beta F (\nabla g(\varPsi) -  \nabla g(F))\big] \big \rangle\\
 &{\ } +
\lambda \big\langle
\Eb\big[\nabla_\beta F(F - \varPsi) \big],   \Eb\big[\nabla_\beta F \nabla g(F)\big] \big \rangle.
\end{align*}
{To see this we can add and subtract the expression $\big\langle \Eb \big[ \nabla_\beta F  \nabla g(F)\big] +  \Eb\big[ \nabla_\beta F \big( \nabla g(F) - \nabla g(\varPsi)\big)\big]
+  \lambda \Eb\big[\nabla_\beta F(F - \varPsi) \big], \Eb\big[ \nabla_\beta F\nabla g(F) \big] \big \rangle$.}
Assumptions \ref{item:Lipschitz} and \ref{item:FIntegrable} yield the estimate:
\[
\langle \nabla_\beta W^\lambda, \Gamma_\beta\rangle
 \le - \big\| \nabla_\beta G \big\|^2 + \lambda \overline{L}_f \| \nabla_\beta G \big\| \sqrt{2Q}
+  4 \lambda \overline{L}_f^2 L_{\nabla g} Q.
\]
In the last inequality we use the assumption that $\lambda \ge L_{\nabla g}$. 

Aggregating the last estimate with \eqref{second-term}, we get
\begin{equation}
\label{e:before-young}
\langle \nabla W^\lambda, \Gamma \rangle
\le
- \Big(\gamma{\Big( \frac{\lambda}{M}  - 2 \overline{L}_\varPsi^2 L_{\nabla g} \Big)}
- 4 \lambda \overline{L}_f^2 L_{\nabla g}\Big) Q
- \big\| \nabla_\beta G \big\|^2
+ \lambda \overline{L}_f \| \nabla_\beta G \big\| \sqrt{2Q}.
\end{equation}
To handle the cross term $\lambda \overline{L}_f \|\nabla_\beta G\| \sqrt{2Q}$, we introduce the shorthand notation
\begin{equation}
\label{e:C-def}
C = \gamma \Big(\frac{\lambda}{M} - 2\overline{L}_\varPsi^2 L_{\nabla g}\Big)
     - 4\lambda \overline{L}_f^2 L_{\nabla g}
\end{equation}
and apply Young's inequality $ab \le \tfrac{\epsilon}{2}a^2 + \tfrac{1}{2\epsilon}b^2$
with $a = \|\nabla_\beta G\|$ and $b = \lambda \overline{L}_f\sqrt{2Q}$,
for a parameter $\epsilon > 0$ to be chosen below:
\begin{equation}
\label{e:young}
\lambda \overline{L}_f \|\nabla_\beta G\| \sqrt{2Q}
\;\le\;
\frac{\epsilon}{2}\,\|\nabla_\beta G\|^2
+ \frac{\lambda^2 \overline{L}_f^2}{\epsilon}\,Q.
\end{equation}
Substituting \eqref{e:young} into \eqref{e:before-young} and collecting the coefficients
of $Q$ and $\|\nabla_\beta G\|^2$, we obtain
\begin{equation}
\label{e:descent-epsilon}
\langle \nabla W^\lambda, \Gamma \rangle
\;\le\;
-\left(C - \frac{\lambda^2 \overline{L}_f^2}{\epsilon}\right) Q
-
{\left(1 - \frac{\epsilon}{2}\right)}
\big\|\nabla_\beta G\big\|^2.
\end{equation}
For \eqref{e:descent-epsilon} to guarantee a descent property  of the method it is necessary and
sufficient that both coefficients in parentheses be positive. 
\begin{comment}
\begin{equation}
\label{e:conditions}
c_2(\epsilon) = 1 - \tfrac{\epsilon}{2} > 0
\;\;\Longleftrightarrow\;\;
\epsilon < 2,
\qquad
c_1(\epsilon) = C - \frac{\lambda^2\overline{L}_f^2}{\epsilon} > 0
\;\;\Longleftrightarrow\;\;
\epsilon > \frac{\lambda^2\overline{L}_f^2}{C}.
\end{equation}
\end{comment}
Hence a valid $\epsilon$ exists if and only if $2C > \lambda^2\overline{L}_f^2$,  
\begin{comment}
the two bounds in \eqref{e:conditions}
are compatible, i.e.,
\[
\frac{\lambda^2\overline{L}_f^2}{C} < 2
\quad\Longleftrightarrow\quad
2C > \lambda^2\overline{L}_f^2,
\]
\end{comment}
which is precisely the condition
\begin{equation}
\label{e:gamma_min}
2 \Big(\gamma{\Big( \frac{\lambda}{M}  - 2 \overline{L}_\varPsi^2 L_{\nabla g} \Big)} - 4\lambda \overline{L}_f^2 L_{\nabla g}\Big) >  \lambda^2 \overline{L}_f^2.
\end{equation}
Therefore, we need to ensure that $\lambda > 2 M \overline{L}_\varPsi^2 L_{\nabla g}$ and
define $\gamma_{\min} := \min\{\gamma > 0: \text{\eqref{e:gamma_min} is satisfied}\}$. Selecting any $\epsilon \in \!\big(\lambda^2\overline{L}_f^2 / C,\, 2\big)$
and denoting the resulting positive constants by
$c_1 := C - \lambda^2\overline{L}_f^2/\epsilon$ and
$c_2 := 1 - \epsilon/2$,
we arrive at the descent inequality \eqref{e:descent}.
\end{proof}
We fix $\lambda > \max\{L_{\nabla g}, 2 M \overline{L}_\varPsi^2 L_{\nabla g}\}$, along with the corresponding constants $\gamma_{\min}, c_1,$ and $c_2$, and denote $W^\lambda$ as $W$.
\begin{lemma}
\label{l:4.3}
    Under Assumptions \textup{\ref{item:Lipschitz}} -- \textup{\ref{item:PsiIntegrable}}, the function $\nabla W(\beta,\theta)$ is Lipschitz continuous.
\end{lemma}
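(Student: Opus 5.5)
We will work directly from the explicit formulas of Lemma \ref{l:4.2}. Each component of $\nabla W$ is an expectation of a product of factors drawn from $\nabla_\beta F$, $\nabla_\theta\varPsi$, $F$, $\varPsi$, $\nabla g(\cdot)$, $\nabla^2 g(\cdot)$ and $F-\varPsi$, all evaluated at $(X,\beta)$ or $(X,\theta)$. Take two points $(\beta,\theta)$ and $(\beta',\theta')$. For each expectation we telescope the difference of products: replace one factor at a time by its primed version, so the difference becomes a sum of expectations of products in which exactly one factor appears as a difference. Each such term is then bounded by H\"older/Cauchy--Schwarz inequalities, with the differenced factor controlled by a Lipschitz estimate and the remaining factors by their moment bounds from \ref{item:Lipschitz}--\ref{item:PsiIntegrable}.

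The elementary Lipschitz estimates we need are the following.
\begin{itemize}
\item By Lemma \ref{lemma:differentiable}, $\|\nabla_\beta F(X,\beta)-\nabla_\beta F(X,\beta')\|\le L_{\nabla F}(X)\|\beta-\beta'\|$, with $L_{\nabla F}$ square integrable.
\item By \ref{item:PsiIntegrable}, $\|\nabla_\theta\varPsi(X,\theta)-\nabla_\theta\varPsi(X,\theta')\|\le L_{\nabla\varPsi}(X)\|\theta-\theta'\|$.
\item By the mean value theorem, $\|F(X,\beta)-F(X,\beta')\|\le \Eb[L_f(X,Y)\mid X]\,\|\beta-\beta'\|$ and $\|\varPsi(X,\theta)-\varPsi(X,\theta')\|\le L_\varPsi(X)\|\theta-\theta'\|$.
\item By \ref{item:Lipschitz}, $\nabla g$ is $L_{\nabla g}$-Lipschitz and bounded by $L_g$, while $\nabla^2 g$ is bounded by $L_{\nabla g}$ and $L_{\nabla^2 g}$-Lipschitz.
\end{itemize}
The multipliers appearing here are $L_{\nabla F}(X)$, $\Eb[L_f\mid X]$, $L_\varPsi(X)$ and $L_{\nabla\varPsi}(X)$. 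Conditional Jensen shows each has finite second, and where needed fourth, moments. The remaining factors are $\nabla_\beta F$, $\nabla_\theta\varPsi$, $F$ and $\varPsi$, each with finite fourth moments.

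For $\nabla_\beta W$, the terms involving $\nabla g$ are handled with two factors. For example,
\[
\Eb\big[\nabla_\beta F(\beta)\nabla g(F(\beta))\big]-\Eb\big[\nabla_\beta F(\beta')\nabla g(F(\beta'))\big]
\]
splits into a part bounded by $L_g\,\Eb[L_{\nabla F}]\,\|\beta-\beta'\|$ and a part bounded by $L_{\nabla g}\,\Eb\big[L_f\cdot\Eb[L_f\mid X]\big]\,\|\beta-\beta'\|$. The latter is finite by Cauchy--Schwarz with $p=2$. The $\lambda$-term $\Eb[\nabla_\beta F(F-\varPsi)]$ yields terms such as $\Eb\big[L_{\nabla F}(X)\|F-\varPsi\|\big]\|\beta-\beta'\|$, bounded by $\overline{L}_{\nabla f}(C_f+C_\varPsi)$ via Cauchy--Schwarz, and terms such as $\Eb\big[\|\nabla_\beta F\|\,L_\varPsi\big]\|\theta-\theta'\|$.

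For $\nabla_\theta W$, the terms have three factors, for instance $\nabla_\theta\varPsi\,\nabla^2 g(\varPsi)(F-\varPsi)$. Differencing $\nabla^2 g$ produces the product $L_\varPsi(X)\cdot L_{\nabla^2 g}\,L_\varPsi(X)\|\theta-\theta'\|\cdot\|F-\varPsi\|$. Its expectation requires $\Eb[L_\varPsi^2\|F-\varPsi\|]$, which is bounded by $\big(\Eb L_\varPsi^4\big)^{1/2}\big(\Eb\|F-\varPsi\|^2\big)^{1/2}$. This is why the assumptions include $p=4$. The remaining pieces are handled analogously, and summing all the constants gives a global Lipschitz constant.

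The main obstacle is the rigorous justification of the moment bounds for the three-factor terms, in particular those where two random multipliers such as $L_\varPsi(X)$ and $L_{\nabla\varPsi}(X)$ multiply $\|F-\varPsi\|$. These must be bounded uniformly in $(\beta,\theta)$. I would use the generalized H\"older inequality with exponents $(4,4,2)$ together with the uniform bounds $\Eb\|F\|^2\le C_f^2$ and $\Eb\|\varPsi\|^2\le C_\varPsi^2$, which hold for every $\beta$ and $\theta$. This makes all the constants independent of the points being compared.
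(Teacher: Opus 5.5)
Your proposal is correct and takes essentially the same route as the paper. Both telescope the product formulas from Lemma~\ref{l:4.2} one factor at a time and bound each piece using Jensen, Cauchy--Schwarz/H\"older, and the $p=2$ and $p=4$ moment bounds of \ref{item:Lipschitz}--\ref{item:PsiIntegrable}, which gives constants uniform in $(\beta,\theta)$. The only cosmetic difference is in the $\nabla^2 g$-difference term: you keep $F-\varPsi$ together and use H\"older with exponents $(4,4,2)$, while the paper splits $F$ and $\varPsi$ and uses their fourth moments. Also, the product $L_\varPsi L_{\nabla\varPsi}\|F-\varPsi\|$ you flag never appears, because when $\nabla_\theta\varPsi$ is the differenced factor, $\nabla^2 g$ is bounded by the constant $L_{\nabla g}$.
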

The proof is provided in Appendix \ref{secA1}. We denote the overall Lipschitz constant of $\nabla W(\beta,\theta)$ by $L_W$. With this Lemma established, we may pass to the estimation of the change in the Lyapunov function in one iteration of the method.
Let $z^k = (\beta^k,\theta^k)$. 
\begin{lemma}
    \label{l:one_step}
    Suppose Assumptions \ref{item:Lipschitz} -- \ref{item:Growth} are satisfied, $\gamma> \gamma_{\min}$, and a deterministic stepsize sequence $\{\tau_k\}$ is used. Then, for every $k \ge 0$,
    \begin{equation}
 \label{e:one_step}
    \Eb\big[ W(z^{k+1}) \mid \Fc_k \big] \leq W(z^k) - \tau_k \Eb\big[  V(z^k) \mid \Fc_k\big]
     + \frac{L_W\tau_k^2(C_d^2 + \sigma^2)}{2}.
\end{equation}
\end{lemma}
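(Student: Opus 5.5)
The plan is to apply the descent lemma for functions with Lipschitz gradient, using Lemma~\ref{l:4.3}, and then take conditional expectations. Since $\nabla W$ is Lipschitz with constant $L_W$, for any $z,z'$ we have
\[
W(z') \le W(z) + \langle \nabla W(z), z'-z\rangle + \frac{L_W}{2}\|z'-z\|^2 .
\]
Apply this with $z=z^k$ and $z'=z^{k+1}=z^k+\tau_k\tilde d^k$, where $\tilde d^k=(\tilde d^k_\beta,\tilde d^k_\theta)$. This gives
\[
W(z^{k+1}) \le W(z^k) + \tau_k\langle \nabla W(z^k), \tilde d^k\rangle + \frac{L_W\tau_k^2}{2}\|\tilde d^k\|^2 .
\]

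Next, take $\Eb[\,\cdot\mid\Fc_k]$. Since $z^k$ is $\Fc_k$-measurable and $\tau_k$ is deterministic, the linear term becomes $\tau_k\langle\nabla W(z^k), d^k\rangle$. Because $(X^k,Y^k)$ is independent of $\Fc_k$, the average direction satisfies $d^k=\Gamma(z^k)$, as seen from the formulas for $d^k_\beta,d^k_\theta$ in the proof of Lemma~\ref{lemma:d2e2-bounded}. Since $\gamma>\gamma_{\min}$, Lemma~\ref{l:descent} gives $\langle\nabla W(z^k),\Gamma(z^k)\rangle\le -V(z^k)$. As $V(z^k)$ is $\Fc_k$-measurable, this term equals $-\tau_k\Eb[V(z^k)\mid\Fc_k]$.

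For the quadratic term, write $\tilde d^k=d^k+e^k$, with $d^k$ being $\Fc_k$-measurable and $\Eb[e^k\mid\Fc_k]=0$. The cross term then vanishes:
\[
\Eb[\|\tilde d^k\|^2\mid\Fc_k]=\Eb[\|d^k\|^2\mid\Fc_k]+\Eb[\|e^k\|^2\mid\Fc_k]\le C_d^2+\sigma^2,
\]
by Lemma~\ref{lemma:d2e2-bounded}. Combining the two pieces yields \eqref{e:one_step}.

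The main technical point is integrability. We need $W(z^{k+1})$ and $\langle\nabla W(z^k),\tilde d^k\rangle$ to be conditionally integrable, so that conditional expectations can be taken termwise. I would justify this from $\|\nabla W(z^k)\|<\infty$ (it is a fixed vector given $\Fc_k$) and the square-integrability of $\tilde d^k$ from Lemma~\ref{lemma:d2e2-bounded}. The inequality itself then bounds $W(z^{k+1})$ from above, while it is bounded below by $G(z^{k+1})$, which handles integrability of $W(z^{k+1})$.
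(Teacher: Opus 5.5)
Your proposal is correct and follows essentially the same route as the paper: the descent inequality from the Lipschitz continuity of $\nabla W$ (Lemma~\ref{l:4.3}), the split $\tilde d^k = d^k + e^k$ with the cross term vanishing under $\Eb[\,\cdot\mid\Fc_k]$, the bounds of Lemma~\ref{lemma:d2e2-bounded}, and Lemma~\ref{l:descent} for the linear term. You also make explicit two points the paper leaves implicit, namely the identification $d^k=\Gamma(z^k)$ and the conditional integrability needed to take expectations termwise.
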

\begin{proof}
Using Lemma \ref{l:4.3}, we obtain the estimate
\begin{align*}
    % \lefteqn{W(z^{k+1})  = W(z^k) + \int_{t=0}^{1}{\langle \nabla W(z^k + t(z^{k+1} - z^k)), z^{k+1} - z^k \rangle dt}}\\
    % &= W(z^k) + \langle\nabla W(z^k),z^{k+1} - z^k\rangle + \int_{t=0}^{1}{\langle \nabla W(z^k + t(z^{k+1} - z^k)) - \nabla W(z^k), z^{k+1} - z^k \rangle dt}\\
    % &\leq W(z^k) + \langle\nabla W(z^k),z^{k+1} - z^k\rangle + \int_{t=0}^{1}{\|\nabla W(z^k + t(z^{k+1} - z^k)) - \nabla W(z^k)\|\|z^{k+1} - z^k \| dt}\\
    \lefteqn{W(z^{k+1}) \leq  W(z^k) + \langle \nabla W(z^k), z^{k+1} - z^k \rangle + \frac{L_W}{2}\|z^{k+1} - z^k\|^2} \\
    % &= W(z^k) + \tau_k\langle \nabla W(z^k), \tilde{d^k}\rangle + \frac{\tau_k^2 L_W}{2} \| \tilde{d^k} \|^2 \\
    &= W(z^k) + \tau_k\langle \nabla W(z^k), d^k\rangle + \tau_k\langle \nabla W(z^k), e^k\rangle
    + \frac{L_W \tau_k^2}{2} \left(\| d^k \|^2 + 2\langle d^k, e^k \rangle + \| e^k \|^2\right).
\end{align*}
Taking the conditional expectation of both sides with respect to $\Fc_k$, using Lemma \ref{lemma:d2e2-bounded}, and observing that $\Eb[e^k|\Fc_k] = 0$, we obtain
\[
    \Eb\big[ W(z^{k+1}) \mid \Fc_k \big] \leq W(z^k) + \tau_k \Eb\big[  \langle \nabla W(z^k), d^k \rangle \mid \Fc_k\big]
     + \frac{L_W\tau_k^2(C_d^2 + \sigma^2)}{2}.
\]
Lemma \ref{l:descent} provides the estimate of the middle term on the right hand side.
\end{proof}

Lemma \ref{l:one_step} can be used to derive various error estimates for a finite number of iterations~$N$. We illustrate
these options with a technique initiated in \cite{ghadimi2013stochastic}.
\begin{theorem}
\label{t:main}
Suppose Problem \eqref{eq:CO} has an optimal solution, Assumptions \ref{item:Lipschitz} -- \ref{item:Growth} are satisfied, and $\gamma > \gamma_{\min}$. Then for every $N>0$, with the stepsize schedule $\tau_k = \frac{\alpha}{\sqrt{N}} $, $k=0,1,2,\dots,N-1$, $\alpha>0$,
 if the method is terminated at a random iteration $S$ uniformly distributed in 
 $\{0,1,\dots,N-1\}$ and independent of other quantities, then
\[
\Eb\big[V(\beta^S,\theta^S) \big] \le \frac{\frac{L_W}{2} (C_d^2 + \sigma^2)\alpha^2 + W(\beta^0,\theta^0) - G^{\min} }{\alpha\sqrt{N}}.
\]
where $G^{\min}  = \min_{\beta} G(\beta)$.
\end{theorem}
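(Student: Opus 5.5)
The plan is the standard telescoping argument of \cite{ghadimi2013stochastic}, applied to the one-step estimate of Lemma~\ref{l:one_step}. Since $V(z^k)$ is $\Fc_k$-measurable, the conditional expectation on the right side of \eqref{e:one_step} is just $V(z^k)$. We take total expectations of \eqref{e:one_step}. This requires $\Eb[W(z^k)]$ to be finite for all $k$; we get this inductively from the same inequality, because $W(z^0)$ is finite and the quadratic bound behind Lemma~\ref{l:one_step} keeps each step integrable. The result is
\[
\Eb\big[W(z^{k+1})\big] \le \Eb\big[W(z^k)\big] - \tau_k \Eb\big[V(z^k)\big] + \frac{L_W \tau_k^2 (C_d^2+\sigma^2)}{2}, \qquad k=0,\dots,N-1.
\]

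Next we sum these inequalities for $k=0,\dots,N-1$ with the constant stepsize $\tau_k=\alpha/\sqrt{N}$. The sum telescopes to
\[
\frac{\alpha}{\sqrt{N}} \sum_{k=0}^{N-1} \Eb\big[V(z^k)\big] \le W(z^0) - \Eb\big[W(z^N)\big] + N\cdot \frac{L_W \alpha^2 (C_d^2+\sigma^2)}{2N}.
\]
The last term is exactly $\frac{L_W}{2}(C_d^2+\sigma^2)\alpha^2$, independent of $N$.

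To bound $\Eb[W(z^N)]$ from below, we use that $\lambda \ge L_{\nabla g}$ gives $\Delta^\lambda \ge 0$. Hence $W(z^N) \ge G(\beta^N) \ge G^{\min}$, where $G^{\min}$ exists because Problem~\eqref{eq:CO} is assumed to have an optimal solution. Therefore $W(z^0) - \Eb[W(z^N)] \le W(\beta^0,\theta^0) - G^{\min}$.

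Finally, $S$ is uniform on $\{0,\dots,N-1\}$ and independent of the iterates, so $\Eb[V(z^S)] = \frac{1}{N}\sum_{k=0}^{N-1}\Eb[V(z^k)]$. Dividing the telescoped inequality by $N\alpha/\sqrt{N} = \alpha\sqrt{N}$ yields the claimed bound.

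The only delicate point is the integrability of $W(z^k)$ and $V(z^k)$, which is needed to take expectations and telescope. We handle it by the induction above. In addition, $V \ge 0$ and $W \ge G^{\min}$, so monotone convergence lets us pass to expectations even before finiteness is known. Everything else is routine bookkeeping.
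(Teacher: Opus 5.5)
Your proposal is correct and follows the paper's proof step for step. You take total expectations in \eqref{e:one_step}, telescope with $\tau_k=\alpha/\sqrt{N}$, bound $\Eb[W(z^N)]\ge G^{\min}$ via $W\ge G$, divide by $\alpha\sqrt{N}$, and identify the average of the $\Eb[V(z^k)]$ with $\Eb[V(z^S)]$. Your extra integrability discussion is harmless but unnecessary: under Assumptions \ref{item:Lipschitz}--\ref{item:PsiIntegrable}, both $W$ and $V$ are uniformly bounded.
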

\begin{proof}
Taking the expected value of both sides of \eqref{e:one_step}, we obtain
\[
   \Eb\big[ W(z^{k+1})- W(z^k) \big] \leq  - \tau_k \Eb\big[  V(z^k) \big] + \frac{L_W\tau_k^2(C_d^2 + \sigma^2)}{2}.
\]
Rearranging and summing these inequalities for $k=0,1,\ldots,N-1$
\begin{equation}
    \sum_{k=0}^{N-1}{\tau_k \Eb\big[V(z^k)\big]} \leq \frac{L_W (C_d^2 + \sigma^2)}{2}\sum_{k=0}^{N-1}{\tau_k^2} + W(z^0) - \Eb \big[ W(z^N)\big].
\end{equation}
Let $\tau_k = \frac{\alpha}{\sqrt{N}}$ and divide both sides by $\alpha\sqrt{N}$. Then
\[
\Eb\left[\frac{1}{N} \sum_{k=0}^{N-1}{V(z^k) }\right] \leq \frac{\frac{L_W}{2} (C_d^2 + \sigma^2)\alpha^2 + W(z^0) - \Eb\big[W(z^N)\big]}{\alpha\sqrt{N}}.
\]
The left side of this inequality can be interpreted as $\Eb\big[V(z^S) \big]$, and $W(z^N) \ge G^{\min}$.
\end{proof}

In a similar way, rate estimates can be derived for other stepsize schedules which do not require fixing $N$ (cf. \cite{ghadimi2013stochastic}). For example,
with $\tau_k = \alpha/\sqrt{k+1}$, $k=0,1,2\dots.$, we get
$\Eb\big[V(\beta^S,\theta^S) \big] \le \mathcal{O} \big(\ln{N}/\sqrt{N}\big)$.
In this case, the distribution of $S$ is $\mathbb{P}[S=k] = \tau_k / \sum_{j=0}^{N-1} \tau_j$.

\bibliographystyle{abbrv}
\bibliography{Conditional-refs}

@article{bolte2010characterizations,
  title={Characterizations of {{\L}}ojasiewicz inequalities: subgradient flows, talweg, convexity},
  author={Bolte, J{\'e}r{\^o}me and Daniilidis, Aris and Ley, Olivier and Mazet, Laurent},
  journal={Transactions of the American Mathematical Society},
  volume={362},
  number={6},
  pages={3319--3363},
  year={2010}
}

@article{lojasiewicz1963propriete,
  title={Une propri{\'e}t{\'e} topologique des sous-ensembles analytiques r{\'e}els},
  author={{\L}ojasiewicz, Stanis{\l}aw},
  journal={Les {\'e}quations aux d{\'e}riv{\'e}es partielles},
  volume={117},
  pages={87--89},
  year={1963}
}

@inproceedings{dai2017learning,
  title={Learning from conditional distributions via dual embeddings},
  author={Dai, Bo and He, Niao and Pan, Yunpeng and Boots, Byron and Song, Le},
  booktitle={Artificial Intelligence and Statistics},
  pages={1458--1467},
  year={2017},
  organization={PMLR}
}

@article{singh2019kernel,
  title={Kernel instrumental variable regression},
  author={Singh, Rahul and Sahani, Maneesh and Gretton, Arthur},
  journal={Advances in Neural Information Processing Systems},
  volume={32},
  year={2019}
}

@article{hu2020sample,
  title={Sample complexity of sample average approximation for conditional stochastic optimization},
  author={Hu, Yifan and Chen, Xin and He, Niao},
  journal={SIAM Journal on Optimization},
  volume={30},
  number={3},
  pages={2103--2133},
  year={2020},
  publisher={SIAM}
}

@article{hu2020biased,
  title={Biased stochastic first-order methods for conditional stochastic optimization and applications in meta learning},
  author={Hu, Yifan and Zhang, Siqi and Chen, Xin and He, Niao},
  journal={Advances in Neural Information Processing Systems},
  volume={33},
  pages={2759--2770},
  year={2020}
}

@inproceedings{jin2020provably,
  title={Provably efficient reinforcement learning with linear function approximation},
  author={Jin, Chi and Yang, Zhuoran and Wang, Zhaoran and Jordan, Michael I},
  booktitle={Conference on Learning Theory},
  pages={2137--2143},
  year={2020},
  organization={Proceedings of Machine Learning Research}
}

@article{ghadimi2013stochastic,
  title={Stochastic first-and zeroth-order methods for nonconvex stochastic programming},
  author={Ghadimi, Saeed and Lan, Guanghui},
  journal={SIAM Journal on Optimization},
  volume={23},
  number={4},
  pages={2341--2368},
  year={2013},
  publisher={SIAM}
}

@article{hu2021bias,
  title={On the bias-variance-cost tradeoff of stochastic optimization},
  author={Hu, Yifan and Chen, Xin and He, Niao},
  journal={Advances in Neural Information Processing Systems},
  volume={34},
  pages={22119--22131},
  year={2021}
}

@article{he2024debiasing,
  title={Debiasing conditional stochastic optimization},
  author={He, Lie and Kasiviswanathan, Shiva},
  journal={Advances in Neural Information Processing Systems},
  volume={36},
  year={2024}
}

@article{goda2023constructing,
  title={Constructing unbiased gradient estimators with finite variance for conditional stochastic optimization},
  author={Goda, Takashi and Kitade, Wataru},
  journal={Mathematics and Computers in Simulation},
  volume={204},
  pages={743--763},
  year={2023},
  publisher={Elsevier}
}

@article{ruszczynski2024functional,
  title={A functional model method for nonconvex nonsmooth conditional stochastic optimization},
  author={Ruszczy{\'n}ski, Andrzej and Yang, Shangzhe},
  journal={SIAM Journal on Optimization},
  volume={34},
  number={3},
  pages={3064--3087},
  year={2024},
  publisher={SIAM}
}

@article{qi2025integrated,
  author    = {Qi, Meng and Grigas, Paul and Shen, Zuo-Jun (Max)},
  journal   = {Operations Research},
  title     = {Integrated Conditional Estimation-Optimization},
  year      = {2025},
  issn      = {1526-5463},
  number    = {3},
  pages     = {1604--1625},
  volume    = {74},
  publisher = {INFORMS},
}

@book{LattimoreSzepesvari2020,
  author    = {Lattimore, Tor and Szepesv{\'a}ri, Csaba},
  title     = {Bandit Algorithms},
  publisher = {Cambridge University Press},
  year      = {2020},
  doi       = {10.1017/9781108571401}
}

@article{bennett2019deep,
  title={Deep generalized method of moments for instrumental variable analysis},
  author={Bennett, Andrew and Kallus, Nathan and Schnabel, Tobias},
  journal={Advances in neural information processing systems},
  volume={32},
  year={2019}
}

@article{chen2024stochastic,
  title={Stochastic optimization algorithms for instrumental variable regression with streaming data},
  author={Chen, Xuxing and Roy, Abhishek and Hu, Yifan and Balasubramanian, Krishnakumar},
  journal={Advances in Neural Information Processing Systems},
  volume={37},
  pages={26510--26542},
  year={2024}
}

@inproceedings{hartford2017deep,
  title={Deep IV: A flexible approach for counterfactual prediction},
  author={Hartford, Jason and Lewis, Greg and Leyton-Brown, Kevin and Taddy, Matt},
  booktitle={International conference on machine learning},
  pages={1414--1423},
  year={2017},
  organization={PMLR}
}

@article{muandet2020dual,
  title={Dual instrumental variable regression},
  author={Muandet, Krikamol and Mehrjou, Arash and Lee, Si Kai and Raj, Anant},
  journal={Advances in Neural Information Processing Systems},
  volume={33},
  pages={2710--2721},
  year={2020}
}

\section*{Appendix}\label{secA1}
\begin{proof}[Proof of Lemma \ref{l:4.3}:]
        We start with $\nabla_\beta W$. Let $u,v \in \Rb^{n_\beta}$, $s,t \in \Rb^{n_\theta}$, $z = (u,s)$, and $\hat{z} = (v,t)$. From Lemma \ref{l:4.2} we estimate
        \begin{align*}
           \lefteqn{\|\nabla_\beta W(z) - \nabla_\beta W(\hat{z}) \| \le \|\Eb \left[\nabla_\beta F(X,u)\nabla g(F(X,u)) - \nabla_\beta F(X,v)\nabla g(F(X,v))\right]\| }\\
            &+ \|\Eb\left[\nabla_\beta F(X,u)\left(\nabla g(F(X,u)) - \nabla g(\varPsi(X,s))\right) - \nabla_\beta F(X,v)\left(\nabla g(F(X,v)) - \nabla g(\varPsi(X,t))\right)\right]\| \\
            &+ \lambda\|\Eb\left[\nabla F(X,u)\left(F(X,u) - \varPsi(X,s)\right) - \nabla F(X,v)\left(F(X,v) - \varPsi(X,t)\right)\right]\|.
        \end{align*}
        For the first term, we apply Jensen's inequality, Lemma \ref{lemma:differentiable}, Assumptions \ref{item:Lipschitz} and \ref{item:FIntegrable} with $p = 2$, and the Cauchy-Schwarz inequality to obtain the estimate
        \begin{align*}
            &\| \Eb \left[ \nabla_\beta F(X,u)\nabla g(F(X,u)) - \nabla_\beta F(X,v)\nabla g(F(X,v))\right] \|\\
            &\le \Eb\big[ \| \nabla_\beta F(X,u)\left(\nabla g(F(X,u)) - \nabla g(F(X,v))\right) + \left(\nabla_\beta F(X,u) - \nabla_\beta F(X,v)\right)\nabla g(F(X,v))\|\big]\\
            &\le \Eb\big[ \| \nabla_\beta F(X,u)\| \|\nabla g(F(X,u)) - \nabla g(F(X,v))\| \big] + \Eb\big[ \|\nabla_\beta F(X,u) - \nabla_\beta F(X,v)\| \|\nabla g(F(X,v))\|\big]\\
            % \intertext{
            %     By Assumption \ref{item:Lipschitz}, $\nabla g$ is $L_{\nabla g}$-Lipschitz and $\|\nabla g(\cdot)\| \le L_g$. By Lemma \ref{lemma:differentiable} and 
            %     Assumption \ref{item:FIntegrable} with $p=2$, $F(X,\cdot)$ is $\overline{L}_f$-Lipschitz and $\nabla_\beta F(X,\cdot)$ is $\overline{L}_{\nabla f}$-Lipschitz. Therefore,
            % }
            &\le (\overline{L}_f^2 L_{\nabla g} + L_g \overline{L}_{\nabla f}) \|u - v\|.
        \end{align*}
        Using the same technique along with Assumption \ref{item:PsiIntegrable} with $p=2$ yields the estimates $(\overline{L}_f^2 L_{\nabla g} + 2 L_g \overline{L}_{\nabla f})\|u-v\| + \overline{L}_f L_{\nabla g} \overline{L}_\varPsi \|s-t\|$ and $(\overline{L}_f^2 + \overline{L}_{\nabla f}C_f + \overline{L}_{\nabla f}C_\varPsi)\|u - v\| + \overline{L}_f \overline{L}_\varPsi\|s - t\|$ for the second and third terms, respectively.
        Combining these and using $\|u-v\| \le \|z-\hat{z}\|$ we see $\nabla_\beta W$ is Lipschitz continuous with constant 
        \[ 
            L_{W_\beta} = \overline{L}_f^2 L_{\nabla g} + L_g \overline{L}_{\nabla f} 
            + \overline{L}_f^2 L_{\nabla g} + 2 L_g \overline{L}_{\nabla f} + \overline{L}_f L_{\nabla g} \overline{L}_\varPsi 
            + \lambda\left(\overline{L}_f^2 + \overline{L}_{\nabla f}C_f + \overline{L}_{\nabla f}C_\varPsi + \overline{L}_f \overline{L}_\varPsi\right).
        \]
        Moving to $\nabla_\theta W$, we estimate
        \begin{align*}
            &\|\nabla_\theta W(z) - \nabla_\theta W(\hat{z}) \| \\
            &\le \big\|\Eb\big[\nabla_\theta \varPsi(X,s) \nabla^2 g(\varPsi(X,s))\left(F(X,u) - \varPsi(X,s)\right) - \nabla_\theta \varPsi(X,t) \nabla^2 g(\varPsi(X,t))\left(F(X,v) - \varPsi(X,t)\right)\big]\big\| \\
            &+ \lambda \big\|\Eb\big[\nabla_\theta \varPsi(X,s)\left(F(X,u) - \varPsi(X,s)\right) - \nabla_\theta \varPsi(X,t)\left(F(X,v) - \varPsi(X,t)\right)\big]\big\|.
        \end{align*}
        The first term is bounded in a similar manner as
        \begin{align*}
            &\big\|\Eb\big[\nabla_\theta\varPsi(X,s) \nabla^2 g(\varPsi(X,s))\left(F(X,u) - \varPsi(X,s)\right) - \nabla_\theta \varPsi(X,t) \nabla^2 g(\varPsi(X,t))\left(F(X,v) - \varPsi(X,t)\right)\big]\big\| \\
            &\le \Eb \big[\|\nabla_\theta\varPsi(X,s)\nabla^2 g(\varPsi(X,s))F(X,u) - \nabla_\theta\varPsi(X,t) \nabla^2 g(\varPsi(X,t))F(X,v)\|\big] \\
            &\;+ \Eb\big[\|\nabla_\theta\varPsi(X,t) \nabla^2 g(\varPsi(X,t))\varPsi(X,t) - \nabla_\theta \varPsi(X,s) \nabla^2 g(\varPsi(X,s))\varPsi(X,s)\|\big]\\ 
            &\le \Eb\big[\|\nabla_\theta\varPsi(X,s)\|\|\nabla^2 g(\varPsi(X,s))F(X,u) - \nabla^2 g(\varPsi(X,t))F(X,v)\|\big] \\
            &\; + \Eb\big[\|\nabla_\theta\varPsi(X,s) - \nabla_\theta\varPsi(X,t)\|\|\nabla^2 g(\varPsi(X,t))F(X,v)\|\big] \\
            &\; + \Eb\big[\|\nabla_\theta\varPsi(X,t)\|\|\nabla^2 g(\varPsi(X,t))\varPsi(X,t) - \nabla^2 g(\varPsi(X,s))\varPsi(X,s)\|\big] \\
            &\; + \Eb\big[\|\nabla_\theta\varPsi(X,t) - \nabla_\theta\varPsi(X,s)\|\|\nabla^2 g(\varPsi(X,s))\varPsi(X,s)\|\big] \\
            &\le \Eb\big[\|\nabla_\theta\varPsi(X,s)\|\|\nabla^2 g(\varPsi(X,s))\|\|F(X,u) - F(X,v)\|\big] \\
            &\; + \Eb\big[\|\nabla_\theta\varPsi(X,s)\|\|\nabla^2 g(\varPsi(X,s)) - \nabla^2 g(\varPsi(X,t))\|\|F(X,v)\|\big] 
            + \overline{L}_{\nabla \varPsi}L_{\nabla g}C_f\|s-t\| \\
            &\; + \Eb\big[\|\nabla_\theta\varPsi(X,t)\|\|\nabla^2 g(\varPsi(X,t))\|\|\varPsi(X,t) - \varPsi(X,s)\|\big]\\
            &\; + \Eb\big[\|\nabla_\theta\varPsi(X,t)\|\|\nabla^2 g(\varPsi(X,t)) - \nabla^2 g(\varPsi(X,s))\|\|\varPsi(X,s)\|\big]
            + \overline{L}_{\nabla \varPsi}L_{\nabla g}C_\varPsi\|s-t\| \\
            % \intertext{Applying the Cauchy-Schwarz inequality and Assumption \ref{item:PsiIntegrable} with $p = 2$, we obtain}
            &\le \overline{L}_\varPsi\Eb\big[\|\nabla^2 g(\varPsi(X,s))\|^2\|F(X,u) - F(X,v)\|^2\big]^{\frac{1}{2}} \\
            &\; + \overline{L}_\varPsi\Eb\big[\|\nabla^2 g(\varPsi(X,s)) - \nabla^2 g(\varPsi(X,t))\|^2\|F(X,v)\|^2\big]^{\frac{1}{2}} 
            + \overline{L}_{\nabla \varPsi}L_{\nabla g}C_f\|s-t\| \\
            &\; + \overline{L}_\varPsi\Eb\big[\|\nabla^2 g(\varPsi(X,t))\|^2\|\varPsi(X,t) - \varPsi(X,s)\|^2\big]^{\frac{1}{2}}\\
            &\; + \overline{L}_\varPsi\Eb\big[\|\nabla^2 g(\varPsi(X,t)) - \nabla^2 g(\varPsi(X,s))\|^2\|\varPsi(X,s)\|^2\big]^{\frac{1}{2}}
            + \overline{L}_{\nabla \varPsi}L_{\nabla g}C_\varPsi\|s-t\|. \\
            \intertext{Applying the Cauchy-Schwarz inequality, Assumption \ref{item:Lipschitz}, and Assumptions \ref{item:FIntegrable} and \ref{item:PsiIntegrable} with $p=4$,
            we obtain}
            &\le \overline{L}_\varPsi L_{\nabla g}\overline{L}_f\|u-v\| + \overline{L}_\varPsi L_{\nabla^2 g}\overline{L}_\varPsi C_f\|s-t\|
            + \overline{L}_{\nabla \varPsi}L_{\nabla g}C_f\|s-t\|\\ 
            &\;+ \overline{L}_\varPsi L_{\nabla g} \overline{L}_\varPsi \|s-t\| + \overline{L}_\varPsi L_{\nabla^2 g} \overline{L}_\varPsi C_\varPsi\|s-t\|
            + \overline{L}_{\nabla \varPsi}L_{\nabla g}C_\varPsi\|s-t\|.
        \end{align*}
        The second term is bounded by $\overline{L}_\varPsi \overline{L}_{f}\|u-v\| + (\overline{L}_{\nabla \varPsi} C_f + \overline{L}_\varPsi^2 + \overline{L}_{\nabla \varPsi} C_\varPsi)\|s-t\|$. We see that $\nabla_\theta W$ is Lipschitz continuous with constant
        \begin{multline*} 
        L_{W_\theta} =  \overline{L}_\varPsi( L_{\nabla g} \overline{L}_f + L_{\nabla^2 g}\overline{L}_\varPsi C_f + L_{\nabla g}\overline{L}_\varPsi + L_{\nabla^2 g}\overline{L}_\varPsi C_\varPsi) 
        \\ {} + \overline{L}_{\nabla \varPsi}L_{\nabla g}(C_f + C_\varPsi)
        + \lambda(\overline{L}_\varPsi \overline{L}_{f} + \overline{L}_{\nabla \varPsi} C_f + \overline{L}_\varPsi^2 + \overline{L}_{\nabla \varPsi} C_\varPsi).
        \end{multline*}
        The overall Lipschitz constant of $\nabla W(\beta,\theta)$ is thus
        \[ L_W = \sqrt{L_{W_\beta}^2 + L_{W_\theta}^2}. 
        \]
    \end{proof}
\end{document}